\def\expandafter\UrlBreaks\expandafter{\UrlBreaks
    \do\a\do\b\do\c\do\d\do\e\do\f\do\g\do\h\do\i\do\j
    \do\k\do\l\do\m\do\n\do\o\do\p\do\q\do\r\do\s\do\t
    \do\u\do\v\do\w\do\x\do\y\do\z\do\A\do\B\do\C\do\D
    \do\E\do\F\do\G\do\H\do\I\do\J\do\K\do\L\do\M\do\N
    \do\O\do\P\do\Q\do\R\do\S\do\T\do\U\do\V\do\W\do\X
    \do\Y\do\Z\do\/\do-}
\DeclareMathAlphabet{\mathpzc}{OT1}{pzc}{m}{it}
\newcommand{\bbR}{\mathbb{R}}
\newcommand{\calF}{\mathcal{F}}
\newcommand{\calG}{\mathcal{G}}
\newcommand{\xgeq}{\geqslant}
\newcommand{\xleq}{\leqslant}
\newtheorem*{thm}{Theorem}
\newtheorem*{lem}{Lemma}
\newtheorem*{prob}{Problem}
\title{Intersections of vertical lines with non-vertical lines}
\author{Mateusz Kula}
\date{}
\begin{document}
\maketitle
\begin{abstract}
  We answer negatively a question: if $\calF$ is a family of $n\xgeq 3$ non-vertical, pairwise non-parallel lines  on the plane and $\bigcap \calF=\emptyset$, is there a vertical line $L$ such that $L\cap\bigcup \calF$ has exactly $n-1$ or $n-2$ points? 
\end{abstract}

  In \cite{leo} P. Leonetti posed the following problem.
  Let $\calF$ be a family of $n\xgeq 3$ non-vertical, pairwise non-parallel lines  on the plane such that they are not concurrent. 
  Is there a vertical line $L$ such that $L\cap\bigcup \calF$ has exactly $n-1$ or $n-2$ points?
  We answer this question negatively, giving  appropriate examples for $n\xgeq7$.

For our purposes, consider a bijection $f$ between the plane $\bbR^2$ and the family of all non-vertical lines such that  $$f(A,B):=\{(x,y)\in\bbR^2\colon y+Ax+B=0\}.$$
	\begin{lem}
	$(P,Q)\in f(A,B)$ if and only if $(A,B)\in f(P,Q)$.
	\end{lem}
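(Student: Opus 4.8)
The plan is to reduce both sides of the claimed equivalence to a single scalar equation by unwinding the definition of $f$ together with the membership relation, and then to read off the symmetry directly. Since $f(A,B)$ is defined as the set of points $(x,y)$ satisfying $y+Ax+B=0$, the assertion $(P,Q)\in f(A,B)$ is, by definition, the result of substituting $x=P$ and $y=Q$ into that equation, namely $Q+AP+B=0$.

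Symmetrically, I would unfold the right-hand side. Here $f(P,Q)$ is the set of $(x,y)$ with $y+Px+Q=0$, so $(A,B)\in f(P,Q)$ means substituting $x=A$ and $y=B$, which yields $B+PA+Q=0$. The only thing requiring care in this step is bookkeeping: the line $f(P,Q)$ carries coefficients $P$ and $Q$ (not $A$ and $B$), so one must be deliberate about which defining equation is used on each side.

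The key step is then simply to observe that the two scalar equations $Q+AP+B=0$ and $B+PA+Q=0$ are literally the same, since multiplication and addition of reals are commutative ($AP=PA$, and the three summands may be reordered freely). Consequently each membership condition holds precisely when the other does, which is the desired biconditional.

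I do not anticipate any genuine obstacle: the content of the lemma is the self-duality of the parametrization $f$ under interchanging the roles of the point and the line-coefficients, reflecting the symmetry of the expression $y+Ax+B$ in the pairs $(A,B)$ and $(P,Q)$. The entire argument is a one-line verification once both sides are expanded.
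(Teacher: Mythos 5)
Your proof is correct and is essentially the paper's own argument: the paper likewise notes that both memberships reduce to the single equation $Q+AP+B=0$, with your extra remarks on commutativity just making that reduction explicit.
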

	\begin{proof}
	Both conditions are equivalent to the equality $Q+AP+B=0$.
	\end{proof}
  Let $q_1,\ldots,q_7\in\bbR^2$ be the vertices of a regular heptagon such that no two of them lie on one vertical line.
  If vertices $q_i, q_j$ lie on the line $K$, then there exist exactly $4$ lines parallel to $K$ such that each of these lines contains at least one vertex $q$.
  The family
  $$\calF:=\{f(q_i)\colon 1\xleq i\xleq 7\}$$
  consists of $n=7$ pairwise non-parallel lines, which are not concurrent. Indeed, if there was a point $p$ such that $p\in f(q_i)$ for each $i$, then by the Lemma points $q_1,\ldots, q_7$ would all lie on one line $f(p)$, which contradicts the fact that $q_1,\ldots,q_7$ are vertices of the heptagon.\\ \indent
	Suppose a vertical line $L$  intersects the union $\bigcup\calF$ at $k$ points $(A,D_1),\ldots,(A,D_k)$, where $1\xleq k\xleq 7$. 
 Observe that $p\in L\cap\bigcup\calF$ if and only if the line $f(p)$ is parallel to $f(A,D_1)$ and contains at least one vertex $q_i$. Since there are exactly $4$ or $7$ such lines  and $f$ is a bijection, we have $k=4$ or $k=7$.
Hence no vertical line $L$ intersects the union $\bigcup\calF$ in exactly $n-1=6$ or $n-2=5$ points. This answers negatively Leonetti's question.

In our opinion one should consider the following problem.
\begin{prob}
  Assume that  $Q\subseteq \bbR^2$ is a finite subset and $\calG$ is a family of parallel lines which covers $Q$, and such that each line $K\in\calG$ intersects $Q$.
  What are possible values of $|\calG|$?
\end{prob}
Let $I(Q)$ denote  the set of all values $|\calG|$, where $\calG$ is as above.
If $q_1,\ldots,q_n$ are vertices of a regular polygon, then one readily verifies that:
\begin{enumerate}
\item[$(1)$] If $n=2k$, then $I(q_1,\ldots,q_n)=\{k,k+1, 2k\}$;
\item[$(2)$] If $n=2k+1$, then $I(q_1,\ldots,q_n)=\{k, 2k+1\}$;
\item[$(3)$] If $n=4k+2$ and $q$ is the centre of the circle on which points $q_i$ lie, then $I(q,q_1,\ldots,q_n)=\{2k+1,2k+3,4k+3\}$;
\item[$(4)$] If $n=4k$ and $q$ is the centre of the circle on which points $q_i$ lie, then $I(q,q_1,\ldots,q_n)=\{2k+1,4k+1\}$.
\end{enumerate}
\begin{thm}
For any $n\xgeq 7$ there exists a family $\calF$ of $n$ non-vertical, pairwise non-parallel lines, which are not concurrent, such that no vertical line $L$ contains  exactly $n-1$ or $n-2$ points of $\bigcup \calF$.
\end{thm}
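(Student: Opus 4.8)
The plan is to transfer the problem, through the duality $f$, into the language of the Problem stated above, and then to check that a suitably rotated regular $n$-gon already does the job for every $n\xgeq 7$. Concretely, I would fix a set $Q=\{q_1,\dots,q_n\}\subseteq\bbR^2$ of $n$ points, no two of which share a vertical line and which are not all collinear, and put $\calF:=\{f(q_i)\colon 1\xleq i\xleq n\}$, exactly as in the heptagon construction. By the Lemma, two lines $f(q_i),f(q_j)$ are parallel iff $q_i,q_j$ have the same first coordinate, so $\calF$ is pairwise non-parallel; and $\calF$ is non-concurrent, since otherwise all the $q_i$ would lie on a single line $f(p)$. All lines of $\calF$ are non-vertical by construction.

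The crucial reduction, implicit already in the $n=7$ argument, is the following. For a vertical line $L=\{x=A\}$, a point $(A,D)$ lies on $f(q_i)=f(P_i,Q_i)$ iff $D=-(Q_i+AP_i)$; hence two vertices $q_i,q_j$ yield the same intersection point exactly when the chord $q_iq_j$ has slope $-A$. Consequently the number $k$ of points of $L\cap\bigcup\calF$ equals the number of distinct lines of slope $-A$ needed to cover $Q$, which is a value $|\calG|$ for a family $\calG$ as in the Problem; thus $k\in I(Q)$. As $A$ ranges over $\bbR$ the slope $-A$ ranges over all non-vertical directions, so every attainable $k$ lies in $I(Q)$. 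It therefore suffices to produce, for each $n\xgeq 7$, a set $Q$ of the above kind with $\{n-1,n-2\}\cap I(Q)=\emptyset$.

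For $Q$ I would take the vertices of a regular $n$-gon rotated by a generic angle, which destroys all vertical alignments (only finitely many angles are forbidden) and keeps the vertices non-collinear. Since $I(Q)$ is an isometry invariant, its value is unaffected by the rotation and is given by the formulas $(1)$ and $(2)$: in every case the only element of $I(Q)$ exceeding $\lfloor n/2\rfloor+1$ is $n$ itself. The argument then closes with the elementary inequality $\lfloor n/2\rfloor+1<n-2$, valid for all $n\xgeq 7$, which places both forbidden values $n-1$ and $n-2$ strictly inside the gap $\bigl(\lfloor n/2\rfloor+1,\;n\bigr)$ that $I(Q)$ avoids. I expect no genuine obstacle beyond making this reduction precise; the one delicate point is the lower bound $n\xgeq 7$, and indeed the same computation explains why the construction breaks down at $n=5,6$ (there $n-2$ coincides with the largest non-generic covering number $\lfloor n/2\rfloor+1$), so the threshold is sharp for regular polygons.
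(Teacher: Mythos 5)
Your proposal is correct and follows essentially the same route as the paper's own proof: dualize via $f$, reduce counting intersection points on a vertical line to the covering numbers $I(Q)$, and take $Q$ to be a (generically rotated) regular $n$-gon so that formulas $(1)$--$(2)$ exclude $n-1$ and $n-2$. You merely make explicit the details the paper leaves to the reader (the chord-slope computation, the rotation to avoid vertical alignments, and the inequality $\lfloor n/2\rfloor+1<n-2$ for $n\xgeq 7$), and you correctly omit the redundant constructions $(3)$--$(4)$.
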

\begin{proof}
If $Q\subseteq\bbR^2$ is a set of points as in (1)--(4) and no two points of $Q$ lie on one vertical line, then $f[Q]$ is a desired family of lines. Indeed, with the Lemma one can check that $k\in I(Q)$ if and only if there exists a vertical line, which has exactly $k$ intersections with the union of the family of lines $f[Q]$.
\end{proof}

In the paper \cite[Theorem 2.1]{pin} the set $I(Q)$ was examined, too. For example, R. Pinchasi shows that for any finite subset $Q\subseteq \bbR^2$ of $n$ non-collinear points
$$\max \left(I(Q)\setminus\{n\}\right) \xgeq \left\lfloor \frac{n+1}{2}\right\rfloor.$$

\end{document}